\def \dist {{\rm dist}}
\DeclareMathOperator{\Ric}{Ric}
\newcommand*{\rom}[1]{\rm {\expandafter\@slowromancap\romannumeral #1@}}
\numberwithin{equation}{section}
\newtheorem{Theorem}{Theorem}[section]
\newtheorem{Lemma}[Theorem]{Lemma}
\theoremstyle{definition}
\title{An optimal volume growth estimate for noncollapsed steady gradient Ricci solitons}
\author{Richard H. Bamler\footnote{supported by NSF grant DMS-1906500.}, Pak-Yeung Chan, Zilu Ma, Yongjia Zhang}
\date{}
\begin{document}

\maketitle

\begin{abstract}
    In this paper, we prove a volume growth estimate for steady gradient Ricci solitons with bounded Nash entropy. We show that such a steady gradient Ricci soliton has volume growth rate no smaller than $r^{\frac{n+1}{2}}.$
    This result not only improves the estimate in \cite[Theorem 1.3]{CMZ21b}, but also is optimal since the Bryant soliton and Appleton's solitons \cite{Ap17} have exactly this growth rate.
\end{abstract}

\section{Introduction}

The Ricci flow has been a powerful tool in settling various longstanding problems in geometry and topology, among which the most well-known ones are the geometrization and the Poincar\'e conjectures. The success of the Hamilton--Perelman program \cite{Ha95, Per02, Per03a, Per03b} in dimension $3$ suggests that the analysis of singularity formation plays a central role in the study of the Ricci flow. In the Hamilton--Perelman program, a \emph{singularity model} is understood to be an ancient solution arising as the smooth limit of a scaled sequence of a Ricci flow forming a finite-time singularity  (see below for more details). Among all singularity models, the most important ones are the shrinking and steady \emph{gradient Ricci solitons}. Perelman's canonical neighborhood theorem shows that a $3$-dimensional Ricci flow becomes locally close to a singularity model wherever the curvature is large. However,  due to the lack of the Hamilton-Ivey pinching estimate, this canonical neighborhood theorem is generally not true in higher dimensions. 

Recently, the first-named author \cite{Bam20a,Bam20b,Bam20c} established a new theory about weak limits of Ricci flows on closed manifolds. This theory sheds more light on the formation of singularities in dimension $4$ and higher. Indeed, the  three last-named authors have already employed these methods in the study of ancient solutions and singularities of the Ricci flow; see, for instance, \cite{MZ21, CMZ21a,CMZ21b,CMZ21c}. Very recently, the first-named author \cite{Bam21} proved that the fundamental group of a noncollapsed ancient Ricci flow is finite. In this paper, we shall study the volume growth of steady gradient Ricci solitons using these techniques.


Let us recall the definition of gradient Ricci soliton. A triple $(M^n,g,f)$ is a called a gradient Ricci soliton if 
\begin{equation}\label{generalsoliton}
    \Ric + \nabla^2 f = \tfrac{\kappa}{2} g,
\end{equation}
for some constant $\kappa\in \mathbb{R}.$
The soliton is called \emph{shrinking} if $\kappa>0$, \emph{steady} if $\kappa=0$, and  \emph{expanding} if $\kappa<0.$
Any soliton canonically induces a Ricci flow, called the \emph{canonical form}. Precisely, if we define $\Phi_t$ and $g_t$ by
\begin{align}\label{canonicalform}
    \frac{\partial}{\partial t}\Phi_t & =\frac{1}{1-\kappa t}\nabla f\circ \Phi_t,\\\nonumber
      \Phi_0&= \text{ id},\\\nonumber
    g_t &= (1-\kappa t)\Phi_t^*g,
\end{align}
then $g_t$ moves by the Ricci flow. In the shrinking ($\kappa>0$) and steady ($\kappa=0$) case, the canonical form is not only a self-similar ancient solution moving by diffeomorphism, but also often arises as a singularity model. For instance, the blow-up limit at every Type I singularity is (the canonical form of) a shrinking gradient Ricci soliton (c.f. \cite{EMT11}), and a degenerate neck-pinch (c.f. \cite{GZ08}) is modeled on a Bryant soliton. We remark that by the recent work of Choi-Haslhofer \cite{CH21}, if we consider the more general \emph{singular Ricci flow} (c.f. \cite{KL17,BK17}) instead of Ricci flow, then there could possibly be non-solitonic blow-up limits.

The study of steady gradient Ricci solitons is important not only for the understanding of the formation of Type-II singularities in particular, but also for the understanding of ancient Ricci flows in general. For instance, a steady soliton may arise as a sequential limit from a shrinking soliton with exactly quadratic curvature growth (c.f. \cite{CFSZ20}); the only positively curved ancient, noncompact and noncollapsed Ricci flow in dimension $3$ is the Bryant soliton (c.f. \cite{Br20}). 

Unlike shrinking solitons, though, the geometric characterizations of steady solitons are less complete. This, to some extent, is reflected by the newer examples constructed by Appleton \cite{Ap17} and Lai \cite{Lai20}. Furthermore, shrinking solitons are automatically strongly noncollapsed (c.f. \cite{CN09, LW20}), but this is obviously not true for steady solitons. In fact, the cigar soliton of Hamilton---the first steady soliton ever found---and the $3$-dimensional flying wings of Lai \cite{Lai20}, conjectured by Hamilton, are collapsed. Up to this point, the volume growth estimates of steady solitons are also less sharp than that of shrinking solitons. O. Munteanu and J.P. Wang \cite{MW12} showed that the volume of the geodesic ball of a noncompact gradient shrinker grows at least linearly in the radius, i.e., $|B(p,r)|\ge Cr$, where $C=C(n)e^{c(n)\mu}$ depends on the dimension and the shrinker entropy $\mu$, and $c(n)>1.$ By using their Sobolev inequality, Y. Li and B. Wang \cite[Proposition 6]{LW20} provided a better constant $C=c(n)e^{\mu}$. 
This estimate is optimal since it is satisfied by cylinders. However, the same technique does not yield an equally nice volume growth estimate for steady solitons. Indeed, the three last named authors \cite{CMZ21b} proved that a Sobolev inequality on a steady soliton implies that the volume growth rate is at least $r^{\frac{n}{2}}$, but this is not optimal since the Bryant soliton has volume growth rate $r^{\frac{n+1}{2}}$.

In this paper, we prove an optimal volume growth estimate for steady gradient Ricci solitons with bounded Nash entropy. First of all, we recall some known results on the volume growth rate for steady solitons. Besides the $r^{\frac{n}{2}}$ volume growth rate lower bound mentioned above (c.f. \cite{CMZ21b}), Munteanu-\v{S}e\v{s}um \cite{MS13} showed that a steady soliton has at least linear volume growth, Cui \cite{Cui16} proved a  volume growth lower bound for steady K\"{a}hler Ricci solitons with positive Ricci curvature.
The \textit{optimal} volume growth lower bound proved in this paper says that a steady gradient Ricci soliton with \emph{bounded Nash entropy} has volume growth rate no smaller than $r^{\frac{n+1}{2}}$. Since the Bryant soliton (c.f. \cite{Cao09}) as well as Appleton's solitons (\cite{Ap17}, they are asymptotic to quotients of the Bryant soliton) have exactly this volume growth rate, our result is optimal indeed. As a consequence, a steady gradient Ricci soliton with volume growth strictly slower than $r^{\frac{n+1}{2}}$ cannot arise as a singularity model (see below).

Throughout the paper, we shall assume that
$(M^n,g,f)$ is a complete steady gradient Ricci soliton normalized in the way that
\begin{equation}
\label{eq: steady def}
   \Ric = \nabla^2 f,\quad
    R+|\nabla f|^2=1. 
\end{equation}
Here, for the notational simplicity, we have reversed the sign of $f$ in (\ref{generalsoliton}). Then the $1$-parameter family of diffeomorphisms $\Phi_t$ defined in (\ref{canonicalform}) is now the group of diffeomorphisms generated by $-\nabla f$ with $\Phi_0={\rm id}.$ We shall still use $g_t=\Phi_t^*g$ to denote the canonical form of the steady soliton.

Let us fix a point $o\in M$, and we shall impose one more condition on the steady soliton in question, namely, a uniformly bounded Nash entropy:
\begin{equation}
\label{eq: bdd nash}
    \mathcal{N}_{o,0}(\tau)\geq -Y\quad\text{ for all }\quad \tau>0,
\end{equation}
 where $Y\in(0,\infty)$ is a constant and $\mathcal N$ should be regarded as the Nash entropy of the canonical form. We refer the readers to \cite{Bam20a} for the definitions. We shall denote by $|\Omega|_g$ the volume of a measurable subset $\Omega\subset M$ relative to the metric $g$ and by $B_r(x)$ or $B(x,r)$ the geodesic ball centered at $x$ with radius $r$. With these preparation, our main theorem is stated as follows.

\begin{Theorem}
\label{thm: steady vol growth}
Suppose that $(M^n,g,f)$ is a complete steady gradient Ricci soliton normalized as in \eqref{eq: steady def} and the canonical form $(M^n,g_t)_{t\in \mathbb{R}}$ satisfies \eqref{eq: bdd nash}. Additionally, assume that \textbf{either one} of the following conditions is true:
\begin{enumerate}[(1)]
    \item $(M^n,g_t)_{t\in\mathbb{R}}$ arises as a singularity model; or
    \item 
    $(M^n,g)$ has bounded curvature.
\end{enumerate}
Then
\[
c(n,\mu_\infty) r^{\frac{n+1}{2}}\le|B_r(o)|
\le C(n,\mu_\infty)r^n \quad\text{ for all }\quad r>\bar r(n,\mu_\infty),
\]
where $ \mu_\infty:=\inf_{\tau>0} \mathcal{N}_{o,0}(\tau)=\lim_{\tau\to\infty}\mathcal N_{o,0}(\tau)  >-\infty$  and $c(n,\mu_\infty)$ and $C(n,\mu_\infty)$ are positive constants of the form
\[
    c(n,\mu_\infty)
    = \frac{c(n)}{\sqrt{1-\mu_\infty}} e^{\mu_\infty}, \quad C(n,\mu_\infty)=C(n) e^{\mu_\infty}.
\]
Furthermore, the upper bound is also true for all $r>0$ (instead of $r\geq\bar r(n,\mu_\infty)$).
\end{Theorem}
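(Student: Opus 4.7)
The plan is to analyze the canonical form $(M,g_t)_{t\in\mathbb{R}}$ of the steady soliton using Bamler's heat-kernel based volume estimates, and then transport the results to $(M,g)$ via the isometries $\Phi_t:(M,g_t)\to(M,g)$. For the upper bound I would apply Bamler's sharp volume upper bound $|B_{g_t}(x,r)|_{g_t}\le C(n)\,e^{\mathcal{N}_{(x,t)}(r^2)}\,r^n$ at the base point $(o,0)$, which yields
\[
|B_r(o)|\ \le\ C(n)\,e^{\mathcal{N}_{o,0}(r^2)}\,r^n.
\]
Once $r\ge \bar r(n,\mu_\infty)$ is large enough, monotone convergence of $\mathcal{N}_{o,0}(\tau)$ to $\mu_\infty$ gives $|B_r(o)|\le C(n)e^{\mu_\infty}r^n$. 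The extension to all $r>0$ is then obtained by combining this with a local volume comparison bootstrap at small scales, using either bounded curvature (case (2)) or the regularity coming from approximating Ricci flows (case (1)).

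For the lower bound, at each scale $\tau>0$ let $\nu_\tau$ denote the conjugate heat kernel measure based at $(o,0)$ evaluated at time $-\tau$ on the canonical form, and let $z_\tau\in M$ be an $H_n$-center in $(M,g_{-\tau})$. Bamler's heat-kernel noncollapsing estimate yields
\[
|B_{g_{-\tau}}(z_\tau,\sqrt{\tau})|_{g_{-\tau}}\ \ge\ c(n)\,e^{\mathcal{N}_{o,0}(\tau)}\,\tau^{n/2}\ \ge\ c(n)\,e^{\mu_\infty}\,\tau^{n/2},
\]
and via the isometry $\Phi_{-\tau}$ this transports to a ball $B_g(p_\tau,\sqrt{\tau})\subset(M,g)$ of the same volume, where $p_\tau:=\Phi_{-\tau}(z_\tau)$. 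Using $|\nabla f|\le 1$ (from $R+|\nabla f|^2=1$ and Chen's $R\ge 0$), the flow $\Phi_t$ has speed at most $1$, so $d_g(o,p_\tau)\le \tau+O(\sqrt{\tau})$ after accounting for the displacement of the $H_n$-center via Gaussian concentration.

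The crux of the lower bound is to select many such $\tau$'s so that the balls $B_g(p_\tau,\sqrt{\tau})$ are essentially disjoint and all lie inside $B_r(o)$. Heuristically, if $\tau\mapsto p_\tau$ traces a curve of approximately unit speed going to infinity, then for each $s\in[1,r]$ there are $\sim \sqrt{s}$-many essentially disjoint balls of radius $\sqrt{s}$ within an annulus around $o$, yielding
\[
|B_r(o)|\ \ge\ c(n)\,e^{\mu_\infty}\int_1^r s^{(n-1)/2}\,ds\ \sim\ c(n)\,e^{\mu_\infty}\,r^{(n+1)/2}.
\]
The factor $(1-\mu_\infty)^{-1/2}$ in the stated constant will arise from a quantitative estimate on $\mathcal{N}_{o,0}(\tau)-\mu_\infty$ used to identify a starting scale $\tau_0\sim 1-\mu_\infty$ beyond which $e^{\mathcal{N}_{o,0}(\tau)}\ge \tfrac12 e^{\mu_\infty}$. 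The main obstacle is a \emph{lower} bound on the speed of $\tau\mapsto p_\tau$: the upper bound is immediate from $|\nabla f|\le 1$, but ruling out that the centers $p_\tau$ cluster near $o$, which would collapse the packing back to the $r^{n/2}$ bound of \cite{CMZ21b}, is where conditions (1) and (2) enter. Either Bamler's compactness theory applied to a blowup sequence (case (1)), or direct bounded-curvature estimates on the canonical form (case (2)), provides the regularity needed to extract a nondegenerate asymptotic direction along which the center curve escapes to infinity.
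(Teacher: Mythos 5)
Your proposal correctly identifies the crux of the lower bound — you need to show that the "center curve" $\tau \mapsto p_\tau$ escapes from $o$ at a definite linear rate, since without that the packing collapses back to the $r^{n/2}$ estimate — but you do not actually supply an argument for this. You write that conditions (1) or (2) "provide the regularity needed to extract a nondegenerate asymptotic direction" via compactness theory or bounded-curvature estimates, but this is a placeholder rather than a proof: it is not at all clear how Bamler's $\mathbb{F}$-compactness or curvature bounds alone would rule out $H_n$-centers oscillating or lingering near $o$. In fact, conditions (1) and (2) are \emph{not} what make the escape argument work; in the paper they serve only to make Bamler's entropy/heat-kernel theory (\cite[Theorems 6.2, 7.2, 8.1]{Bam20a}) applicable on a possibly noncompact steady flow. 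The escape argument itself is purely Perelman $\mathcal{L}$-geometric and uses neither (1) nor (2).

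The missing key idea is the following. Work with $\ell$-centers $p_\tau$ (not $H_n$-centers), interpreted on the static background via $\lambda(x,\tau) := \ell(\Phi_\tau(x),\tau)$. On a steady soliton one can show directly, by reparametrizing $u=\sqrt{s}$, completing a square, and using $|\nabla f|\le 1$, that $\lambda(o,\tau)\ge \tau/12$ — the reduced distance from $(o,0)$ to the worldline of $o$ grows \emph{linearly} in $\tau$. Combined with the elementary triangle-type inequality $\lambda(x,(1+\delta)^2\tau)\le \lambda(y,\tau)+\dist^2(x,y)/(\delta\tau)+5\delta\tau$ and Perelman's bound $\lambda(p_\tau,\tau)\le n/2$, one forces $\dist(p_\tau,o)\ge \alpha\tau$. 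One then cuts the minimizing $\Lambda$-geodesic to $p_{\tau/\alpha}$ at the first exit time from $B_\tau(o)$ to produce a point $x_\tau$ with $\dist(x_\tau,o)=\tau$ exactly and bounded $\lambda$, and only \emph{then} passes to the $H_n$-center via the comparison $(4\pi\tau)^{-n/2}e^{-\ell}\le K$ and Bamler's Gaussian upper bound, which shows the $H_n$-center lies within $\sqrt{C_n(1-\mu_\infty)\tau}$ of $x_\tau$; this radius $\sqrt{A\tau}$ with $A=C_n(1-\mu_\infty)$ is the actual source of the $(1-\mu_\infty)^{-1/2}$ in the final constant, not a cutoff scale in $\mathcal{N}_{o,0}$ as you suggest. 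Without the $\lambda(o,\tau)\ge\tau/12$ lemma, your argument cannot proceed; with it, the rest of your outline (noncollapsing at each scale, packing disjoint balls along the center curve, summing $\sum \tau_j^{n/2}\sim\int \tau^{(n-1)/2}d\tau$) is essentially the paper's.
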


A \emph{singularity model} is an ancient solution $(M^n,g_t)_{t\in(-\infty,0]}$ arising as a blow-up limit of a compact Ricci flow $(\overline{M}^n, \overline{g}_t)_{t\in [0,T)}$ around its singular time. A singularity model in the sense of Hamilton \cite{Ha95} is a smooth Cheeger-Gromov-Hamilton limit, whereas a singularity model in the sense of \cite{Bam20c} is a $\mathbb{F}$-limit (c.f. \cite{Bam20b}). In fact, by \cite[Theorem 2.5]{Bam20c}, a \emph{smooth} singularity model in the sense of \cite{Bam20c} is also a singularity model in the sense of Hamilton (but the reverse is not true). In the assumption of Theorem \ref{thm: steady vol growth}(1), the singularity model can be either in the sense of Hamilton or in the sense of \cite{Bam20c}. 
\\

\noindent\textbf{Remarks.}
\begin{enumerate}
    \item The bounded Nash entropy assumption (\ref{eq: bdd nash}) implies strong noncollapsing, and it is obvious that (\ref{eq: bdd nash}) holds on every singularity model. Yet it is an interesting question to ask whether (\ref{eq: bdd nash}) is equivalent to the (either strong or weak) noncollapsing condition on every steady soliton.
        \item  $\mu_\infty$ in the statement of Theorem \ref{thm: steady vol growth} is the shrinker entropy of any tangent flow at infinity of the ancient solution $(M^n,g_t)_{t\in(-\infty]}$ given by \cite[Theorem 2.40]{Bam20c}. Moreover, the value of $\mu_\infty$ is independent of the choice of the point $o$. This can be seen from \cite[Proposition 4.6]{MZ21}. 
    \item The volume growth upper bound is a direct consequence of \cite[Theorem 8.1]{Bam20a} and we will leave the detailed proof to the reader; in this paper we shall only prove the volume growth lower bound. 
    \item The volume growth upper bound is also sharp since it is satisfied by the steady Gaussian soliton. This conclusion is in the spirit of a similar result in the shrinking case (c.f. \cite{Mun09} and \cite{CZ09}). Previous works on the volume growth upper bound for steady Ricci solitons include \cite{MS13, WW13}. 
    \item It is proved in \cite{CFSZ20} that a $4$-dimensional steady gradient Ricci soliton which arises as a singularity model 
    must have bounded curvature. As a consequence, if $n=4$, then case (2) in the statement of Theorem \ref{thm: steady vol growth} is redundant.
\end{enumerate}

\textbf{Acknowledgements.}
The third named author would like to thank Professor Yuxing Deng for  enlightening discussions regarding the volume growth on steady solitons. The last named author would like to thank Professor Jiaping Wang for some helpful discussions and communications.

\section{Proofs}

Roughly speaking, we prove the main theorem by packing balls centered at $\ell$-centers, namely, the points at which Perelman's \cite{Per02} reduced distance function almost attains its minimum (see below for the definition). Since, as it is shown by the three last-named authors \cite[Proposition 5.6]{CMZ21a}, $\ell$-centers are always close to $H_n$-centers (c.f. \cite[Definition 3.10]{Bam20a}), a ball centered at an $\ell$-center must have a volume lower bound estimate as given by \cite[Theorem 6.2]{Bam20a}. This is the argument which proves the optimal volume growth lower bound. 

Since the canonical form of the steady soliton $(M^n,g,f)$ moves only by diffeomorphism, we may work with Perelman's $\mathcal{L}$-geometry \cite[\S 7]{Per02} on the background of the static manifold $(M^n,g).$ 

\subsection{Perelman's \texorpdfstring{$\mathcal{L}$}{L}-geometry on steady solitons}

\def \tgam {\Tilde{\gamma}}

 As mentioned before, we will use $g_t$ to represent the canonical form of the steady soliton $(M^n,g,f)$ satsifying the conditions of Theorem \ref{thm: steady vol growth}. Recall that Perelman defined the $\mathcal{L}$-length in \cite[\S 7]{Per02}. For any $\tau>0,$ and any piecewisely smooth curve $\Gamma:[0,\tau]\to M$ with $\Gamma(0)=o$, 
\[
\mathcal{L}(\Gamma)
:= \int_0^{\tau} \sqrt{s}(R_{g_{-s}}+|\dot\Gamma|_{g_{-s}}^2)(\Gamma(s))\, ds.
\]
To reinterpret the $\mathcal L$-geometry on the static background $(M,g)$, let
\[
    \gamma(s) = \Phi_{-s}(\Gamma(s))\quad \text{ for }\quad s\in[0,\tau].
\]
Then
\[
    \dot\gamma
    = \nabla f|_{\Gamma} + \Phi_{-s*}(\dot\Gamma),
\]
and
\[
    \mathcal{L}(\Gamma)
= \int_0^{\tau} \sqrt{s}\big(R_g+|\dot\gamma -\nabla f|_g^2\big)(\gamma(s))\, ds,
\]
and this expression only uses the static metric $g.$
If we perform a change of variables: $u=\sqrt{s},$ and write $\tgam(u)= \gamma(u^2),$ then
\[
    \mathcal{L}(\Gamma)
= \int_0^{\sqrt{\tau}} \left(\tfrac{1}{2}\left|\dot\tgam - 2u\nabla f\right|^2 + 2u^2 R(\tgam(u))\right) du.
\]

For any $x\in M$ and $\tau>0,$ we define
\[
    L(\Phi_\tau(x),\tau)
    := \inf_{\Gamma} \mathcal{L}(\Gamma),
\]
where the infimum is taken over all $\Gamma:[0,\tau]\to M$ with $\Gamma(0)=o$ and $\Gamma(\tau)=\Phi_{\tau}(x).$ On the static metric background, we may define an equivalent function:
\begin{equation}\label{static reduced length}
    \Lambda(x,\tau) := L(\Phi_\tau(x),\tau)
    = \inf \int_0^{\tau} \sqrt{s}(R_g+|\dot\gamma -\nabla f|_g^2)(\gamma(s))\, ds,
\end{equation}
where the infimum is taken over all $\gamma:[0,\tau]\to M$ with $\gamma(0)=o$ and $\gamma(\tau)=x$, and a curve at which the above infimum is attained shall be called a \textbf{$\Lambda$-geodesic}.
Accordingly, define
\[
    \lambda(x,\tau) := \ell(\Phi_\tau(x),\tau)
    := \frac{1}{2\sqrt{\tau}}\Lambda(x,\tau).
\]
Arguing as Perelman in \cite[Section 7.1]{Per02}, we have that, for any $\tau>0,$ there is a point $p_\tau\in M$ such that
$\lambda(p_\tau,\tau)=\ell(\Phi_{\tau}(p_\tau),\tau) \le n/2.$
Any such point $p_\tau$ is called an \textbf{$\ell$-center} at time $-\tau$. Note that in our current case we are considering the $\ell$-center on a static metric background, hence it differs from the $\ell$-center defined in \cite{CMZ21a} by a diffeomorphism.

\subsection{Locations of \texorpdfstring{$\ell$}{ell}-centers}

\begin{Lemma}
\label{lem: l dist at o} $\lambda(o,\tau)\ge\tau/12,$ for any $\tau>0.$
\end{Lemma}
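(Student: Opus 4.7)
The plan is to work with the static reformulation of $\Lambda$ from \eqref{static reduced length} and exploit the steady soliton identity $R+|\nabla f|^2=1$ together with an integration-by-parts trick. For any loop $\gamma:[0,\tau]\to M$ based at $o$, the identity lets me rewrite the integrand of $\mathcal L$ as
\[
\sqrt{s}\bigl(R+|\dot\gamma-\nabla f|^2\bigr)
= \sqrt{s}\bigl(1+|\dot\gamma|^2-2\langle\dot\gamma,\nabla f\rangle\bigr).
\]
Since $f$ is a function on the static background, $\langle\dot\gamma,\nabla f\rangle = \tfrac{d}{ds}f(\gamma(s))$. I normalize $f(o)=0$ (allowable, since adding a constant to $f$ preserves both $\Ric=\nabla^2 f$ and $R+|\nabla f|^2=1$ and does not affect $\Phi_t$), and integrate the cross term by parts in $s$. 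Both boundary contributions vanish: $\sqrt{s}\to 0$ at $s=0$, and $f(\gamma(\tau))=f(o)=0$ at the other end. The result is
\[
\Lambda(o,\tau)
=\inf_{\gamma(0)=\gamma(\tau)=o}
\int_0^\tau \Bigl[\sqrt{s}(1+|\dot\gamma|^2)+s^{-1/2}f(\gamma(s))\Bigr]\,ds.
\]

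Next, I invoke B.-L. Chen's theorem that $R\ge 0$ on any complete steady soliton. This yields $|\nabla f|^2=1-R\le 1$, so $f$ is $1$-Lipschitz; with $f(o)=0$ this gives $f(\gamma(s))\ge -r(s)$, where $r(s):=d_g(\gamma(s),o)$. Since the distance function is $1$-Lipschitz along $\gamma$, $|r'|\le|\dot\gamma|$, and the above infimum is bounded below by the one-dimensional variational problem
\[
\Lambda(o,\tau)\ge \inf_{r(0)=r(\tau)=0,\ r\ge 0}\int_0^\tau\Bigl[\sqrt{s}\bigl(1+(r')^2\bigr)-s^{-1/2}r\Bigr]\,ds.
\]

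The Euler--Lagrange equation $\tfrac{d}{ds}\bigl(2\sqrt{s}\,r'\bigr)=-s^{-1/2}$ with the given boundary values is solved by $r_\star(s)=\sqrt{s}\bigl(\sqrt{\tau}-\sqrt{s}\bigr)$, which is automatically nonnegative on $[0,\tau]$. Substituting back, a short calculation evaluates the minimum to $\tfrac{1}{2}\tau^{3/2}$. Hence $\Lambda(o,\tau)\ge \tfrac{1}{2}\tau^{3/2}$, and dividing by $2\sqrt{\tau}$ produces
\[
\lambda(o,\tau)\;\ge\;\tfrac{\tau}{4}\;\ge\;\tfrac{\tau}{12},
\]
as claimed (with plenty of room in the constant).

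The main obstacle is the indefinite cross term $-2\langle\dot\gamma,\nabla f\rangle$: a pointwise Cauchy--Schwarz estimate combined with $|\nabla f|\le 1$ is too lossy and only yields $\Lambda(o,\tau)\ge 0$. The integration-by-parts step, which converts this term into the tractable $s^{-1/2}f(\gamma(s))$, is what unlocks the argument; the Lipschitz bound on $f$ and the explicit 1D minimization then close everything cleanly.
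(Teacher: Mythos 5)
Your proof is correct, and it actually sharpens the constant from $\tau/12$ to $\tau/4$. The overall strategy parallels the paper's: both proofs use the normalization $R+|\nabla f|^2=1$ to expand the $\mathcal L$-integrand, both integrate the cross term $\langle\dot\gamma,\nabla f\rangle$ by parts (the boundary terms vanish because the curve is a loop at $o$ and because of the $\sqrt{s}$ weight), and both then use the $1$-Lipschitz bound on $f$, which — as you correctly note, and as the paper leaves implicit — relies on B.-L.~Chen's nonnegativity of the scalar curvature on steady solitons. Where you genuinely diverge is in the final estimate. The paper (after passing to $u=\sqrt s$) replaces the curve by the single scalar $L=\sup_u d(o,\tilde\gamma(u))$, bounds the kinetic energy from below via Cauchy--Schwarz in terms of $L$, bounds the $f$-term by $-2L\sqrt\tau$, and completes the square in $L$ to get $\Lambda(o,\tau)\ge\tfrac{1}{6}\tau^{3/2}$. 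You instead retain the full radial profile $r(s)=d(\gamma(s),o)$, observe $|r'|\le|\dot\gamma|$ a.e.\ and $f\circ\gamma\ge -r$, and thereby reduce to the convex one-dimensional variational problem
\[
\inf_{r(0)=r(\tau)=0,\ r\ge 0}\int_0^\tau\bigl[\sqrt{s}\bigl(1+(r')^2\bigr)-s^{-1/2}r\bigr]\,ds,
\]
whose Euler--Lagrange minimizer $r_\star(s)=\sqrt{s}(\sqrt\tau-\sqrt s)$ is admissible and gives the infimum exactly as $\tfrac12\tau^{3/2}$ (convexity of the integrand in $(r,r')$ guarantees the critical point is the global minimizer, and since $r_\star\ge 0$ the constraint is inactive). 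This is a strictly tighter argument than the paper's: compressing $r(\cdot)$ to a single scalar $L$ is lossy because it forgets the shape of the profile. Both arguments are valid; yours buys the optimal constant for this step, which is more than enough room since $\tau/4\ge\tau/12$.
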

\begin{proof}
Let $\gamma:[0,\tau]\to M$ be a loop at $o$ and let $\tgam:[0,\sqrt{\tau}]\to M$ be the reparametrization: $\tgam(u)=\gamma(u^2).$
Then
\begin{align}\label{nonsense}
    \int_0^\tau \sqrt{s}
    &(R + |\dot\gamma-\nabla f|^2)
    =\int_0^{\sqrt{\tau}} \left(\tfrac{1}{2}\left|\dot\tgam - 2u\nabla f\right|^2 + 2u^2 R(\tgam(u))\right) du.\\\nonumber
    &= \int_0^{\sqrt{\tau}} 
    \left(\tfrac{1}{2}|\dot\tgam|^2 
    - 2u (f\circ\tgam - f(o))' + 2u^2\right) du \\\nonumber
    &= \frac{2}{3}\tau^{3/2}
    + \int_0^{\sqrt{\tau}} 
    \left(\tfrac{1}{2}|\dot\tgam|^2 
    + 2 (f\circ\tgam(u) - f(o))\right) du,
\end{align}
where in the second equality we have applied (\ref{eq: steady def}). Let $F(u)=f\circ\tgam(u) - f(o)$ and define
\[
L:= \sup_{u\in [0,\sqrt{\tau}] }\  \dist(o,\tgam(u))
=: \dist(o,\tgam(u_1)),
\]
for some $u_1\in [0,\sqrt{\tau}].$
Then we have
\begin{align*}
    \frac{1}{2}\int_0^{\sqrt{\tau}} 
    |\dot\tgam|^2
    &\ge \frac{1}{2}\int_0^{u_1} 
    |\dot\tgam|^2
    + \frac{1}{2}\int_{u_1}^{\sqrt{\tau}} 
    |\dot\tgam|^2\\
    &\ge \frac{L^2}{2} \left(
    \frac{1}{u_1}+\frac{1}{\sqrt{\tau}-u_1}
    \right)
    \ge \frac{2L^2}{\sqrt{\tau}},
\end{align*}
where we have applied the Cauchy-Schwarz inequality (e.g. $L^2\leq (\int_0^{u_1}|\dot\tgam|)^2\leq \int_0^{u_1}|\dot\tgam|^2\cdot \int_0^{u_1} 1^2$). Since $|\nabla f|\le 1$ by (\ref{eq: steady def}), we have
\[
    |F(u)| \le \dist(\tgam(u),o)\le L,\quad \forall \,u\in [0,\sqrt{\tau}],
\]
and thus
\[
\int_0^{\sqrt{\tau}} 
    2 (f\circ\tgam(u) - f(o)) du\geq -2L\sqrt{\tau}.
\]
In summary, we have
\begin{align*}
    \int_0^\tau \sqrt{s}
    &(R + |\dot\gamma-\nabla f|^2)
    \ge \frac{2}{3}\tau^{3/2}
    + \frac{2L^2}{\sqrt{\tau}} - 2 L \sqrt{\tau}\\
    &= \frac{2}{3}\tau^{3/2}
    + \frac{2}{\sqrt{\tau}}(L^2-L\tau)\\
    &= \frac{1}{6}\tau^{3/2}
    + \frac{2}{\sqrt{\tau}}(L-\tau /2)^2 \ge \frac{1}{6}\tau^{3/2},
\end{align*}
and the conclusions follow by taking the infimum on the left hand side.
     
\end{proof}

The following Lemma is straightforward and is similar to the standard triangle inequality; c.f. \cite[\S 4, Claim 3]{CMZ21a}.
\begin{Lemma}
\label{lem: triang}
For any $x,y\in M,\tau>0$ and
any $\delta\in (0,1),$
\[
    \lambda(x,(1+\delta)^2\tau)
        \le \lambda(y,\tau) +  \frac{\dist^2(x,y)}{\delta \tau}
    + 5\delta \tau.
\]
\end{Lemma}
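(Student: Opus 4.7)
The plan is the classical concatenation argument for reduced-distance triangle inequalities, here adapted to the static-background formulation in \eqref{static reduced length}. Fix $\varepsilon>0$ and pick a near-minimizer $\gamma_1\colon[0,\tau]\to M$ with $\gamma_1(0)=o$, $\gamma_1(\tau)=y$, and
\[
\int_0^\tau \sqrt{s}\bigl(R+|\dot\gamma_1-\nabla f|^2\bigr)(\gamma_1(s))\,ds \le 2\sqrt{\tau}\,\lambda(y,\tau)+\varepsilon.
\]
Extend this curve on $[\tau,(1+\delta)^2\tau]$ by the constant-$g$-speed parametrization $\gamma_2$ of a minimizing $g$-geodesic from $y$ to $x$, so that $|\dot\gamma_2|_g=\dist(x,y)/\bigl((2\delta+\delta^2)\tau\bigr)$. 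The concatenated curve is an admissible competitor for the infimum defining $\Lambda(x,(1+\delta)^2\tau)$, so it suffices to bound the contribution of $\gamma_2$ to the integral.

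On the second segment I use the soliton identity \eqref{eq: steady def} to rewrite
\[
R+|\dot\gamma_2-\nabla f|^2 = 1+|\dot\gamma_2|^2-2\langle \dot\gamma_2,\nabla f\rangle,
\]
and then absorb the cross term via a weighted AM--GM estimate $2|\dot\gamma_2||\nabla f|\le \theta|\dot\gamma_2|^2 + 1/\theta$, using $|\nabla f|\le 1$, to obtain the pointwise bound $(1+1/\theta)+(1+\theta)|\dot\gamma_2|^2$. Integrating against $\sqrt{s}\,ds$ over $[\tau,(1+\delta)^2\tau]$ produces a factor $\tfrac{2}{3}\tau^{3/2}\bigl[(1+\delta)^3-1\bigr]$; factoring $(1+\delta)^3-1 = \delta\bigl[(1+\delta)^2+(1+\delta)+1\bigr]$ and substituting the explicit speed of $\gamma_2$ cancels all but one power of $\delta$ from the denominator. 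Dividing the resulting bound on $\Lambda(x,(1+\delta)^2\tau)$ by $2(1+\delta)\sqrt{\tau}$ and sending $\varepsilon\to 0$ leaves an inequality of the shape
\[
\lambda(x,(1+\delta)^2\tau) \le \frac{\lambda(y,\tau)}{1+\delta} + B(\theta,\delta)\,\frac{\dist^2(x,y)}{\delta\tau} + A(\theta,\delta)\,\delta\tau,
\]
with $A,B$ explicit rational functions of $\theta$ and $\delta$.

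To close the argument I invoke $\lambda(y,\tau)\ge 0$---valid because the canonical form of a steady soliton is ancient, hence $R\ge 0$ and the $\Lambda$-integrand is nonnegative---to absorb the factor $(1+\delta)^{-1}$ into $\lambda(y,\tau)$. What remains is to choose $\theta$ so that $B(\theta,\delta)\le 1$ and $A(\theta,\delta)\le 5$ uniformly for $\delta\in(0,1)$; a brief calculation shows $\theta=3$ works, with $B$ attaining $1$ only in the limit $\delta\to 0$ and $A$ staying comfortably below $5$. The main (and only) difficulty is this constant calibration, which is routine bookkeeping rather than a substantive obstacle.
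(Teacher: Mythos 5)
Your proof is correct and takes essentially the same approach as the paper: concatenate a (near-)minimizing $\Lambda$-geodesic from $o$ to $y$ with a constant-speed $g$-geodesic from $y$ to $x$ on $[\tau,(1+\delta)^2\tau]$, expand the integrand via the soliton identity $R+|\nabla f|^2=1$ and a Young inequality, then divide by $2(1+\delta)\sqrt{\tau}$ and use $\lambda\ge 0$. The paper's only simplification is that it parametrizes the second segment at constant speed in $u=\sqrt{s}$ rather than in $s$, so the kinetic term evaluates directly to $\dist^2(x,y)/(\delta\sqrt{\tau})$ and the $\theta$-calibration of constants you perform is unnecessary.
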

\begin{proof}
Let $\gamma_1:[0,\tau]\to M$ be a minimizing $\Lambda$-geodesic from $o$ to $y$, namely, a curve at which the infimum in (\ref{static reduced length}) is attained.
Let $\tgam_2:[\sqrt{\tau},(1+\delta)\sqrt{\tau}]\to M$ be a minimizing $g$-geodesic from $y$ to $x$ with constant speed. 
Define $\gamma_2:[\tau,(1+\delta)^2\tau]\to M$ by
$\gamma_2(s)=\tgam_2(\sqrt{s}).$
\begin{align*}
    \Lambda(x,(1+\delta)^2\tau)
    &\le \int_0^\tau \sqrt{s}(R+|\dot\gamma_1-\nabla f|^2)(\gamma_1(s))\, ds
    + \int_\tau^{(1+\delta)^2\tau} \sqrt{s}(R+|\dot\gamma_2-\nabla f|^2)(\gamma_2(s))\, ds\\
    & \le \Lambda(y,\tau)
    + \int_{\sqrt{\tau}}^{(1+\delta)\sqrt{\tau}}
        \left(\tfrac{1}{2}|\dot \tgam_2|^2 
        + 2u |\dot \tgam_2| |\nabla f|
        + 2 u^2(R+|\nabla f|^2)\right) du\\
    &\le \Lambda(y,\tau)
    + \int_{\sqrt{\tau}}^{(1+\delta)\sqrt{\tau}}
\left(|\dot \tgam_2|^2 
        + 4 u^2\right) du\\
    & \le \Lambda(y,\tau)
    + \frac{\dist^2(x,y)}{\delta\sqrt{\tau}}
    +  4\frac{(1+\delta)^{3}-1}{3}\tau^{3/2}\\
    &\le \Lambda(y,\tau)
    + \frac{\dist^2(x,y)}{\delta\sqrt{\tau}}
    +  10\delta \tau^{3/2}.
\end{align*}
The conclusion follows by dividing $2(1+\delta)\sqrt{\tau}$ on both sides.
\end{proof}

\begin{Lemma}
\label{lem: center linear growth}
There is a universal constant $\alpha\in (0,1),$ such that
for any $\tau\ge \bar{\tau}(n)$ and any $\ell$-center $p_\tau$, we have
\[
     \dist(p_\tau,o) 
    \ge \alpha \tau.
\]
\end{Lemma}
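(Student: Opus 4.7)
The plan is to combine the two preceding lemmas: the lower bound $\lambda(o,\tau)\geq\tau/12$ from Lemma \ref{lem: l dist at o} with the triangle-type inequality from Lemma \ref{lem: triang}, applied with the roles $x=o$, $y=p_\tau$. This will give an inequality of the form \textquote{linear-in-$\tau$ quantity} $\leq$ \textquote{quantity involving $\dist^2(o,p_\tau)$}, from which the claim follows.

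Concretely, I would apply Lemma \ref{lem: triang} to obtain
\[
    \lambda(o,(1+\delta)^2\tau) \;\le\; \lambda(p_\tau,\tau) \;+\; \frac{\dist^2(o,p_\tau)}{\delta\tau} \;+\; 5\delta\tau.
\]
By the definition of an $\ell$-center, $\lambda(p_\tau,\tau)\leq n/2$. By Lemma \ref{lem: l dist at o}, the left-hand side is at least $(1+\delta)^2\tau/12$. Rearranging yields
\[
    \dist^2(o,p_\tau) \;\ge\; \delta\tau\!\left[\frac{(1+\delta)^2}{12}\tau - 5\delta\tau - \frac{n}{2}\right]
    \;=\; \delta\!\left[\frac{(1+\delta)^2}{12} - 5\delta\right]\!\tau^2 \;-\; \frac{n\delta}{2}\tau.
\]
I would then fix $\delta\in(0,1)$ small enough (e.g.\ $\delta=1/100$) to guarantee $A:=\frac{(1+\delta)^2}{12}-5\delta>0$, which is a universal choice independent of $n$. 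For $\tau$ at least a dimensional constant $\bar\tau(n)$, the linear-in-$\tau$ error term is absorbed into half of the leading $\tau^2$ term, giving $\dist^2(o,p_\tau)\geq \tfrac{1}{2}A\delta\,\tau^2$, and hence $\dist(o,p_\tau)\geq\alpha\tau$ with $\alpha=\sqrt{A\delta/2}\in(0,1)$ universal.

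I do not foresee a genuine obstacle here: the whole argument is a careful bookkeeping of the constants produced by Lemmas \ref{lem: l dist at o} and \ref{lem: triang}. The one subtlety worth stating explicitly is that the constant $\alpha$ must be extracted before one is allowed to discard the additive $n/2$ and $n\delta\tau/2$ error terms, which is precisely what forces the threshold $\tau\geq\bar\tau(n)$ to be dimensional (while $\alpha$ itself remains universal, as claimed).
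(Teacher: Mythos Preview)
Your proposal is correct and follows essentially the same approach as the paper: combine Lemma~\ref{lem: l dist at o} with Lemma~\ref{lem: triang} applied at $x=o$, $y=p_\tau$, use $\lambda(p_\tau,\tau)\le n/2$, and choose a small universal $\delta$ (the paper takes $\delta=10^{-3}$, you take $\delta=1/100$) so that the coefficient of $\tau^2$ is positive. The only cosmetic difference is that the paper absorbs the $n/2$ into the $5\delta\tau$ term first (writing $10\delta\tau$), whereas you carry it along and absorb it at the end; both routes force the same dimensional threshold $\bar\tau(n)$.
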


\begin{proof}
By Lemma \ref{lem: l dist at o} and Lemma \ref{lem: triang}, for any $\delta\in(0,1),$ if $\tau\ge \bar\tau(n,\delta),$ then we have
\begin{align*}
    \frac{(1+\delta)^2\tau}{12} 
    & \le \lambda(o,(1+\delta)^2\tau)
    \le \lambda(p_\tau,\tau) +  \frac{\dist^2(p_\tau,o)}{\delta\tau}
    + 5\delta\tau\\
    & \le  \frac{\dist^2(p_\tau,o)}{\delta\tau}
    + 10\delta\tau,
\end{align*}
where we have used the fact that $\lambda(p_\tau,\tau)\leq\frac{n}{2}$. We may take, e.g., $\delta=10^{-3}$ to obtain the  inequality.



\end{proof}

\begin{Lemma}
\label{lem: modified center}
For any $\tau\ge \bar\tau(n),$ there is $x_\tau \in M$ such that $\dist(x_\tau,o)=\tau$ and $ \lambda(x_\tau,\tau_0)\le C$ for some $\tau_0\in[c\tau,\tau/\alpha],$ where $c>0$ and $C<\infty$ are dimensional constants and $\alpha$ is given by Lemma \ref{lem: center linear growth}.
\end{Lemma}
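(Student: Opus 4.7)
The idea is to follow a minimizing $\Lambda$-geodesic from $o$ to an $\ell$-center at a slightly larger scale, and to take $x_\tau$ to be the first point at which this geodesic meets the distance sphere $\{\dist(\cdot,o)=\tau\}$. Specifically, set $\tau':=\tau/\alpha$ with $\alpha$ from Lemma \ref{lem: center linear growth}, and let $p_{\tau'}$ be an $\ell$-center at time $-\tau'$, so $\lambda(p_{\tau'},\tau')\le n/2$. For $\tau\ge \bar\tau(n)$, Lemma \ref{lem: center linear growth} applied at scale $\tau'$ gives $\dist(p_{\tau'},o)\ge \alpha\tau'=\tau$. Let $\bar\gamma:[0,\tau']\to M$ be a minimizing $\Lambda$-geodesic from $o$ to $p_{\tau'}$; since $s\mapsto\dist(\bar\gamma(s),o)$ is continuous, starts at $0$ and ends at a value $\ge\tau$, the set $\{s\in(0,\tau']:\dist(\bar\gamma(s),o)=\tau\}$ is closed and nonempty, so I define $s_0$ as its minimum and $x_\tau:=\bar\gamma(s_0)$, so that $\dist(x_\tau,o)=\tau$.

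For the upper bound on $\lambda(x_\tau,s_0)$, I would use that $\bar\gamma|_{[0,s_0]}$ is itself minimizing for $\Lambda(x_\tau,s_0)$, together with Chen's nonnegativity $R\ge 0$ on complete steady gradient Ricci solitons, which makes the $\mathcal L$-integrand pointwise nonnegative. Truncating the $\Lambda$-integral at $s_0$ therefore cannot increase it:
\[
\Lambda(x_\tau,s_0)\le \Lambda(p_{\tau'},\tau')=2\sqrt{\tau'}\,\lambda(p_{\tau'},\tau')\le n\sqrt{\tau'},
\]
hence $\lambda(x_\tau,s_0)\le \tfrac{n}{2}\sqrt{\tau'/s_0}$. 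The target bound $\lambda(x_\tau,s_0)\le C$ will therefore follow once I establish a linear lower bound $s_0\ge c\tau$; this is the heart of the argument and quantifies that a $\Lambda$-geodesic with bounded action cannot escape the ball $B(o,\tau)$ too quickly.

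For that lower bound on $s_0$, I would pass to the $u=\sqrt{s}$ reparametrization $\tilde{\bar\gamma}(u)=\bar\gamma(u^2)$ and use the elementary inequality $\tfrac12|a-b|^2\ge \tfrac14|a|^2-\tfrac12|b|^2$ with $a=\dot{\tilde{\bar\gamma}}$, $b=2u\nabla f$, together with $|\nabla f|^2\le 1$ and $R\ge 0$, to bound the integrand from below by $\tfrac14|\dot{\tilde{\bar\gamma}}|^2-2u^2$. Integrating on $[0,\sqrt{s_0}]$ and applying Cauchy--Schwarz via $\int_0^{\sqrt{s_0}}|\dot{\tilde{\bar\gamma}}|\,du\ge \dist(o,x_\tau)=\tau$ yields
\[
\Lambda(x_\tau,s_0)\ge \frac{\tau^2}{4\sqrt{s_0}}-\frac{2s_0^{3/2}}{3}.
\]
Combined with $\Lambda(x_\tau,s_0)\le n\sqrt{\tau'}$ this becomes $\tau^2\le 4n\sqrt{\tau' s_0}+\tfrac{8}{3}s_0^2$, so at least one of the two terms on the right-hand side is $\ge \tau^2/2$. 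The first alternative forces $s_0\ge \alpha\tau^3/(64n^2)$, which contradicts $s_0\le \tau'=\tau/\alpha$ once $\tau$ exceeds a dimensional threshold; hence the second alternative must hold and gives $s_0\ge \tfrac{\sqrt{3}}{4}\tau$. Setting $c:=\sqrt{3}/4$ and $C:=n/(2\sqrt{c\alpha})$ yields both $s_0\in[c\tau,\tau/\alpha]$ and $\lambda(x_\tau,s_0)\le C$, finishing the proof with $\tau_0:=s_0$.
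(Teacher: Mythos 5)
Your proof is correct and follows essentially the same strategy as the paper: truncate a minimizing $\Lambda$-geodesic to the $\ell$-center $p_{\tau/\alpha}$ at the sphere of radius $\tau$, use nonnegativity of the $\mathcal{L}$-integrand to compare $\Lambda(x_\tau,\tau_0)$ with $\Lambda(p_{\tau/\alpha},\tau/\alpha)$, and combine Cauchy--Schwarz with a Young-type inequality to force $\tau_0\gtrsim\tau$. The only cosmetic differences are that you take the first rather than the last hitting time of the distance sphere and close the argument via a dichotomy rather than the paper's direct estimate $n\sqrt{\tau_0\tau/\alpha}\le\tfrac18\tau^2$; neither affects the substance.
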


\begin{proof}
Let $\gamma:[0,\tau/\alpha]\to M$ be a minimizing $\Lambda$-geodesic from $o$ to $p:=p_{\tau/\alpha}.$ 
By  Lemma \ref{lem: center linear growth}, $\dist(p,o)\ge \tau.$
So we can define
\[
    \tau_0 := \sup\{ s\in [0,\tau/\alpha]: \dist(\gamma(s),o)\le \tau \},\quad
    x_\tau := \gamma(\tau_0).
\]
We first show that $\tau_0\ge c\tau$ for some universal constant $c>0$. 
Define $\tgam:\left[0,\sqrt{\tau/\alpha}\,\right]\to M$ by $\tgam(u)=\gamma(u^2).$
Note that, arguing in the same way as (\ref{nonsense}), we have
\begin{align*}
    \frac{1}{2}\int_0^{\sqrt{\tau_0}}  |\dot \tgam|^2
    &\le \Lambda(p,\tau/\alpha)
    + \int_0^{\sqrt{\tau_0}} 2u \langle \dot\tgam, \nabla f \rangle\\
    &\le n\sqrt{\tau/\alpha}
    + \frac{1}{4}\int_0^{\sqrt{\tau_0}}|\dot \tgam|^2
    + 4 \int_0^{\sqrt{\tau_0}} u^2,\\
   \frac{1}{4}\int_0^{\sqrt{\tau_0}}  |\dot \tgam|^2 &\le n\sqrt{\tau/\alpha} + \frac{4}{3}\tau_0^{3/2}.
\end{align*}
It follows that
\begin{align*}
    \frac{1}{4}\tau^2
    &= \frac{1}{4} \dist(o,x_\tau)^2
    \le \frac{1}{4} \left(
    \int_0^{\sqrt{\tau_0}} |\dot \tgam|
    \right)^2
    \le \frac{1}{4}\sqrt{\tau_0} \int_0^{\sqrt{\tau_0}}  |\dot \tgam|^2\\
    &\le  n\sqrt{\tau_0\tau/\alpha} + \frac{4}{3}\tau_0^{2}
    \le \frac{1}{8}\tau^2
    +\frac{4}{3}\tau_0^{2},
\end{align*}
if $\tau\ge \bar\tau(n).$ Hence $\tau_0\ge c\tau$ for some dimensional constant $c>0.$ Then
\[
    \lambda(x_\tau,\tau_0)
    \le \frac{\sqrt{\tau/\alpha}}{\sqrt{\tau_0}} \lambda(p,\tau/\alpha)
    \le \frac{n}{2\sqrt{c\alpha}}.
\]
\end{proof}

\begin{Lemma}
\label{main lem}
Suppose that $(M^n,g,f)$ satisfies the assumptions in Theorem \ref{thm: steady vol growth}.
Then for any $\tau\ge \bar\tau(n),$ there is $x_\tau\in M$ such that $\dist(x_\tau,o)=\tau$ and
\[
    |B(x_\tau, \sqrt{A\tau})| \ge c e^{\mu_\infty} \tau^{n/2},
\]
where $A=C_n(1-\mu_\infty), c=c(n)>0.$
\end{Lemma}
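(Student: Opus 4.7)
The plan is to upgrade the reduced-length control on $x_\tau$ produced by Lemma \ref{lem: modified center} to an actual volume lower bound by passing to the canonical form and invoking the $H_n$-center machinery of \cite{Bam20a, CMZ21a}. Concretely, Lemma \ref{lem: modified center} supplies $x_\tau$ with $\dist(x_\tau,o)=\tau$ and $\lambda(x_\tau,\tau_0)\le C(n)$ for some $\tau_0\in[c\tau,\tau/\alpha]$. Setting $q:=\Phi_{\tau_0}(x_\tau)$, this reads on the canonical form $(M^n,g_t)$ as Perelman's reduced length bound $\ell_{(o,0)}(q,\tau_0)\le C(n)$, so $q$ is an ``almost $\ell$-center'' for $(o,0)$ at time $-\tau_0$.

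Next I would apply \cite[Proposition 5.6]{CMZ21a}, which says that any point with bounded reduced length is within $g_{-\tau_0}$-distance $C'(n)\sqrt{\tau_0}$ of an $H_n$-center $z$ of $(o,0)$ at time $-\tau_0$. Combined with \cite[Theorem 6.2]{Bam20a}, which provides the noncollapsing estimate
\[
|B_{g_{-\tau_0}}(z,\sqrt{A_0\tau_0})|_{g_{-\tau_0}}\;\ge\; c(n)\,e^{\mathcal N_{o,0}(\tau_0)}\,\tau_0^{n/2},\qquad A_0=C(n)\bigl(1-\mathcal N_{o,0}(\tau_0)\bigr),
\]
and with the monotonicity of the Nash entropy (forcing $\mathcal N_{o,0}(\tau_0)\ge \mu_\infty$), one obtains a definite ball around $q$:
\[
|B_{g_{-\tau_0}}(q,\sqrt{A\tau_0})|_{g_{-\tau_0}}\;\ge\; c(n)\,e^{\mu_\infty}\,\tau_0^{n/2},\qquad A=C_n(1-\mu_\infty),
\]
by enlarging the radius slightly to absorb the $\ell$-to-$H_n$-center discrepancy $C'(n)\sqrt{\tau_0}$. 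Hypotheses (1) or (2) of Theorem \ref{thm: steady vol growth} are used precisely to justify application of the $H_n$-center theory and the noncollapsing theorem to the canonical form.

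Finally I would transfer this back to the static metric. Since $g_{-\tau_0}=\Phi_{-\tau_0}^{*}g$, the diffeomorphism $\Phi_{-\tau_0}$ is an isometry from $(M,g_{-\tau_0})$ onto $(M,g)$ carrying $q=\Phi_{\tau_0}(x_\tau)$ to $x_\tau$, so the $g$-volume of $B(x_\tau,\sqrt{A\tau_0})$ equals the $g_{-\tau_0}$-volume of $B_{g_{-\tau_0}}(q,\sqrt{A\tau_0})$. Because $\tau_0\asymp \tau$ by Lemma \ref{lem: modified center}, both $\sqrt{A\tau_0}$ and $\tau_0^{n/2}$ match the stated form $\sqrt{A\tau}$ and $\tau^{n/2}$ up to dimensional constants, delivering the claim.

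The main obstacle will be the bookkeeping in the middle step: tracking that the radius constant $A$ picks up the factor $(1-\mu_\infty)$ (which is what produces the $|\mu_\infty|$-dependence in the statement), and verifying that the $\ell$-center to $H_n$-center discrepancy $C'(n)\sqrt{\tau_0}$ can indeed be absorbed into this enlargement without spoiling the linear-in-$(1-\mu_\infty)$ form of $A$.
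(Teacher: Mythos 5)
Your strategy is essentially the same as the paper's, but where you cite \cite[Proposition 5.6]{CMZ21a} as a black box, the paper reproves that proximity estimate inline by sandwiching the conjugate heat kernel between Perelman's $\ell$-based lower bound $(4\pi\tau_0)^{-n/2}e^{-\ell(y_\tau,\tau_0)}$ and the Nash-entropy Gaussian upper bound around the $H_n$-center $z$ from \cite[Theorem 7.2]{Bam20a}, yielding the explicit control $\dist_{-\tau_0}^2(y_\tau,z)\le 9(-\mu_\infty+C_n)\tau_0$. One small correction to your bookkeeping: that $\ell$-center--to--$H_n$-center discrepancy is not purely dimensional $C'(n)\sqrt{\tau_0}$; it carries the factor $\sqrt{1-\mu_\infty}$ (since the heat-kernel upper bound contributes $e^{-\mu_\infty}$). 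This does not spoil the argument because $A$ is built as $\alpha A = 18(-\mu_\infty+C_n)+10H_n$, so the enlarged radius is still $\sqrt{A\tau_0}$ with $A$ linear in $1-\mu_\infty$, but you should not assume the discrepancy is independent of $\mu_\infty$. Also, rather than applying the statement of \cite[Theorem 6.2]{Bam20a} around $z$ and then enlarging the ball, the paper establishes the mass concentration $\nu_{-\tau_0}(B_{g_{-\tau_0}}(y_\tau,\sqrt{\alpha A\tau_0}))\ge 1/2$ directly around $y_\tau$ (via the distance bound and \cite[Proposition 3.13]{Bam20a}) and then invokes the \emph{proof} of Theorem 6.2; these are equivalent, but the paper's route makes the constant-tracking cleaner. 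Your transfer back to the static metric via the isometry $\Phi_{\tau_0}$ and the use of $\tau_0\asymp\tau$ are exactly as in the paper.
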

\begin{proof}
Let $\nu_t=\nu_{o,0|t}$ be the conjugate heat kernel (c.f. \cite[Definition 2.4]{Bam20a}) based at $(o,0)$ coupled with the canonical form $(M,g_t)$.
Let $x_\tau, \tau_0$ be given by Lemma \ref{lem: modified center}.
Recall that $c\tau\le \tau_0\le \tau/\alpha$ and $\lambda(x_\tau,\tau_0)\le C,$ for some dimensional constants $c,C$ and $\alpha$ is given by Lemma \ref{lem: center linear growth}.
By the proof of \cite[Theorem 6.2]{Bam20a},  it suffices to show that
\begin{equation}
\label{eq: nu concen}
    \nu_{-\tau_0}\left(B_{g_{-\tau_0}}\left(y_\tau, \sqrt{\alpha A\tau_0}\right)\right) 
    \ge 1/2,
\end{equation}
where $y_\tau=\Phi_{{\tau_0}}(x_\tau).$ Because once we can show \eqref{eq: nu concen}, by the proof of \cite[Theorem 6.2]{Bam20a}, we have
\begin{align}\label{anothernonsense}
   \left |B_g\left(x_\tau,\sqrt{A\tau}\right)\right|_g
&\ge\left|B_g\left(x_\tau,\sqrt{\alpha A\tau_0}\right)\right|_g
    =\left|B_{g_{-\tau_0}}\left(y_\tau, \sqrt{\alpha A\tau_0}\right)\right|_{g_{-\tau_0}}\\\nonumber
    &\ge c_n e^{\mu_\infty} \tau_0^{n/2}
    \ge c_n e^{\mu_\infty} \tau^{n/2},
\end{align}
where we used the fact that $\tau/\alpha\ge \tau_0\ge c\tau$ for some dimensional constant $c>0$.
We leave the details of the proof of (\ref{anothernonsense}) to the reader. Note that, by \cite[Proposition 3.3]{CMZ21a}, \cite[Theorem 6.2]{Bam20a} also holds for Ricci flows with bounded curvature on compact intervals.

Now we prove \eqref{eq: nu concen}.
Let $(z,-\tau_0)$ be an $H_n$-center of $(o,0).$
By \cite[9.5]{Per02} and \cite[Theorem 7.2]{Bam20a} (or \cite[Theorem 3.2]{CMZ21a}), we have
\begin{align*}
    (4\pi\tau_0)^{-n/2}e^{-C}&\le (4\pi\tau_0)^{-n/2}e^{-\ell(y_\tau,\tau_0)}
    \le K(o,0\,|\,y_\tau,-\tau_0)\\
    &\le C_n e^{-\mu_\infty} \tau_0^{-n/2}\exp\left(
        - \frac{\dist_{-\tau_0}^2(y_\tau,z)}{9\tau_0}
    \right),
\end{align*}
where $K$ is the fundamental solution to the conjugate heat equation, and we also used Lemma \ref{lem: modified center} and the fact that $\ell(y_\tau,\tau_0)=\lambda(x_\tau,\tau_0)\leq C$.
Hence
\[
    \dist_{-\tau_0}^2(y_\tau,z)
    \le 9(-\mu_\infty+C_n)\tau_0.
\]
We choose $A$ so that 
\[
    \alpha A = 18(-\mu_\infty+C_n) + 10H_n.
\]
By \cite[Proposition 3.13]{Bam20a}, we have
\[
    \nu_{-\tau_0}\left(B_{g_{-\tau_0}}\left(y_\tau, \sqrt{\alpha A\tau_0}\right)\right) 
    \ge 
    \nu_{-\tau_0}\left(B_{g_{-\tau_0}}\left(z, \sqrt{\alpha A\tau_0/2}\right)\right) 
    \ge 1 - \frac{2H_n}{\alpha A}
    \ge \frac{1}{2}.
\]
So we finished the proof of \eqref{eq: nu concen}.

\end{proof}

\subsection{Proof of the main theorem}
Now we are ready to prove Theorem \ref{thm: steady vol growth}.

\begin{proof}[Proof of Theorem \ref{thm: steady vol growth}]
Let $\bar\tau(n)<\infty$ be given by Lemma \ref{main lem}. For each $r>10A+\bar\tau(n),$ we construct a decreasing sequence $r=\tau_1>\tau_2>\cdots > \tau_N>0,$ such that $\tau_N<r/10$ and for $1\le j\le N-1,$
\[
    \tau_{j}-\tau_{j+1}
    = \sqrt{A\tau_j} + \sqrt{A\tau_{j+1}}.
\]
As long as $\tau_j\ge r/10$, the above equation is solvable for positive $\tau_{j+1}$ since the discriminant $A+4(\tau_j-\sqrt{A\tau_j})=4(\sqrt{\tau_j}-\sqrt{A}/2)^2\ge 0$. Since $\tau_j\geq r/10$ and $r>10A$, there is a unique positive solution for $\tau_{j+1}$. Moreover,  $\tau_j-\tau_{j+1}\ge \sqrt{A\tau_j}\ge \sqrt{Ar/10}$, hence we can find a finite positive integer $N$ such that $0<\tau_N< r/10$.
For each $j,$ by Lemma \ref{main lem}, there is $x_j\in M$ such that
$\dist(x_j,o)=\tau_j,$ and
\[
    \left|B\left(x_j,\sqrt{A\tau_j}\right)\right|
    \ge c(n)e^{\mu_\infty} \tau_j^{n/2}.
\]
By the construction of $\{\tau_j\},$ the balls $\left\{B\left(x_j,\sqrt{A\tau_j}\,\right)\right\}_{j=1}^N$ are pairwise disjoint.
It follows that
\begin{align*}
    |B_{2r}(o)|
    &\ge \sum_{j=1}^N 
    \left|B\left(x_j,\sqrt{A\tau_j}\right)\right|
    \ge \sum_{j=1}^N  c(n)e^{\mu_\infty} \tau_j^{n/2}\\
    &\ge  \tfrac{c(n)}{2\sqrt{A}}e^{\mu_\infty}\sum_{j=1}^{N-1}   \tau_j^{\frac{n-1}{2}}
    (\tau_j -\tau_{j+1})\\
    &\ge 
    \tfrac{c(n)}{\sqrt{A}}e^{\mu_\infty}\sum_{j=1}^{N-1}   \int_{\tau_{j+1}}^{\tau_j} \tau^{\frac{n-1}{2}}\, d\tau\\
    &= \tfrac{c(n)}{\sqrt{A}}e^{\mu_\infty}
    \int_{\tau_N}^r \tau^{\frac{n-1}{2}}\, d\tau\\
    &\ge \tfrac{c(n)}{\sqrt{A}} e^{\mu_\infty}r^{\frac{n+1}{2}}.
\end{align*}

\end{proof}

\bibliographystyle{amsalpha}

\newcommand{\etalchar}[1]{$^{#1}$}
\providecommand{\bysame}{\leavevmode\hbox to3em{\hrulefill}\thinspace}
\providecommand{\MR}{\relax\ifhmode\unskip\space\fi MR }
\providecommand{\MRhref}[2]{%
  \href{http://www.ams.org/mathscinet-getitem?mr=#1}{#2}
}
\providecommand{\href}[2]{#2}

\bigskip
\bigskip

\noindent Department of Mathematics, University of California Berkeley, CA 94720, USA
\\ Email address: \verb"rbamler@berkeley.edu"
\\

\noindent Department of Mathematics, University of California, San Diego, CA 92093, USA
\\ E-mail address: \verb"pachan@ucsd.edu "
\\

\noindent Department of Mathematics, University of California, San Diego, CA 92093, USA
\\ E-mail address: \verb"zim022@ucsd.edu"
\\

\noindent School of Mathematics, University of Minnesota, Twin Cities, MN 55414, USA
\\ E-mail address: \verb"zhan7298@umn.edu"

\end{document}